\def\@currentlabel{2.1}\label{e:dispaa}
\def\@currentlabel{2.21}\label{e:dispau}
\def\@currentlabel{2.22}\label{e:dispav}
\def\@currentlabel{2.23}\label{e:dispaw}
\def\@currentlabel{2.24}\label{e:dispax}
\def\theequation{\thesection.\@arabic\c@equation}
\numberwithin{equation}{section}
\newcommand{\rem}{{Rm}}
\newcommand{\bla}{{\lambda,\lambda',h,h',k}}
\newcommand{\R} {\mathbb R}
\newcommand{\cuad}{{\sqcap\kern-.68em\sqcup}}
\newcommand{\uk}{u_{{_K}}}
\newcommand{\p}{\frac{n+2}{n-2}}
\newcommand{\mmm}{\frac{4}{n-2}}
\newcommand{\vp}{\varphi}
\newcommand{\vP}{\varPhi}
\newcommand{\vps}{\psi}
\newcommand{\be}{\begin{equation}}
\newcommand{\bee}{\begin{equation*}}
\newcommand{\ee}{\end{equation}}
\newcommand{\eee}{\end{equation*}}
\newcommand{\bs}{\begin{split}}
\newcommand{\esp}{\end{split}}
\newcommand{\la}{\lambda}
\renewcommand{\theequation}{\thesection.\arabic{equation}}
\newtheorem{cor}{Corollary}[section]
\newtheorem{defn}{Definition}[section]
\newtheorem{thm}{Theorem}[section]
\newtheorem{remark}{Remark}[section]
\newtheorem{lemma}{Lemma}[section]
\newtheorem{case}{Case}
\newcommand{\bremark}{\begin{remark} \em}
\newcommand{\eremark}{\end{remark} }
\title{ New type I ancient compact solutions of  the \\ Yamabe flow}
\author{Panagiota Daskalopoulos}
\address{ {\bf P. Daskalopoulos:} Department of Mathematics, Columbia University, 2990 Broadway, New York, NY 10027, USA.}
\email{pdaskalo@math.columbia.edu}
\author{Manuel del Pino}
\address{ {\bf M. del Pino:} Departamento de Ingenier\'{\i}a Matem\'atica and CMM, Universidad de Chile, Casilla 170 Correo 3, Santiago, Chile. } \email{delpino@dim.uchile.cl
}
\author{John King}
\address{ {\bf J. King:}  School of Mathematical Sciences, The University of Nottingham, University Park,
Nottingham, NG7 2RD}\email{john.king@nottingham.ac.uk
}
\author{Natasa Sesum}
\address{ {\bf N. Sesum:} Department of Mathematics, Rutgers University, 110 Frelinghuysen road, Piscataway,  NJ 08854, USA.}\email{natasas@math.rutgers.edu
}
\begin{document}

\maketitle

\begin{abstract}
We construct new  ancient compact solutions to the  Yamabe flow.  Our solutions
are rotationally symmetric and converge, as $t \to -\infty$, to  two self-similar complete non-compact  solutions to the Yamabe flow 
moving  in opposite directions. They are type I ancient solutions.

\end{abstract}

\section{Introduction}\label{sec-into}

Let $(M,g_0)$ be a compact manifold without boundary  of dimension $n \geq 3$. If $g = \bar \vp^{\frac 4{n-2}} \, g_0$
is a metric conformal to $g_0$, the scalar curvature $R$  of $g$ is given in terms of the
scalar curvature $R_0$ of $g_0$ by
$$R= \bar \vp^{-\frac {n+2}{n-2}} \, \big ( - \bar c_n \Delta_{g_0} \bar \varphi  + R_0 \, \bar \vp \big )$$
where $\Delta_{g_0}$ denotes the Laplace Beltrami operator with respect to $g_0$ and $\bar c_N =  4 (n-1)/(n-2)$.

In 1989 R. Hamilton introduced the  {\em Yamabe  flow}
\begin{equation}
\label{eq-YF}
\frac{\partial g}{\partial t} = -R\, g
\end{equation}
as an approach to give yet another proof of the {\em Yamabe problem}  on manifolds of positive conformal Yamabe invariant, using the flow and the parabolic techniques.
The Yamabe flow \eqref{eq-YF} is the negative $L^2$-gradient flow of the total scalar curvature, restricted to a given conformal class. This was shown by S. Brendle \cite{S2, S1} (up to a technical condition in dim $n \geq 6$).  Significant earlier works  in this directions include those by R. Hamilton \cite{H}, B. Chow \cite{Ch}, R. Ye \cite{Y},
H. Schwetlick and M. Struwe \cite{SS} among many others. The Yamabe conjecture, was previously  shown by R. Shoen 
via elliptic methods in his seminal work \cite{S}.

\smallskip 
In the special case where the background manifold $M_0$ is the  sphere $S^n$ and $g_0$ is the standard
spherical metric $g_{_{S^n}}$, the Yamabe flow evolving a metric $g= \bar \vp^{\frac 4{n-2}}(\cdot,t)
 \, g_{_{S^n}}$ takes (after  rescaling in time by a constant)
	the form of the {\em  fast diffusion equation}
\begin{equation}
\label{eq-YFS}
(\bar \vp^\frac{n+2}{n-2})_t  = \Delta_{S^n} \bar \vp -c_n \bar \vp,  \qquad c_n = \frac{n(n-2)}{4}.
\end{equation}
Starting with any smooth metric $g_0$ on $S^n$, it follows by the results in \cite{Ch}, \cite{Y} and \cite{DS} that
the solution of \eqref{eq-YFS} with initial data $g_0$ will become singular at some finite time $t < T$ and
$g$ becomes spherical at time $T$,  which means that after
a normalization, the normalized flow converges to the spherical metric.  In addition, $\bar \vp$ becomes extinct  at $T$.

A metric $g =  \bar \vp^{\frac 4{n-2}} \, g_{_{S^n}}$ may also be expressed as a metric on $\R^n$ via  stereographic
projection. It follows that if $g =  \hat \vp^{\frac 4{n-2}} (\cdot,t) \, g_{_{\R^n}}$ (where $g_{_{\R^n}}$ denotes the standard metric
on $\R^n$)  evolves by the Yamabe flow \eqref{eq-YF},  then $\hat  \vp$ satisfies (up to rescaling in time by a constant) the fast diffusion equation on $\R^n$
\begin{equation}
\label{eq-ufd}
(\hat \vp^p)_t = \Delta \hat \vp, \quad \qquad p:= \frac{n+2}{n-2}.
\end{equation}
Observe that if $g =  \hat \vp^{\frac 4{n-2}} (\cdot,t) \, g_{_{\R^n}}$ represents a smooth solution when lifted to $S^n$,
then $\hat{\vp}(\cdot,t)$ satisfies the growth condition
$$\hat \vp(z,t) = O  (|z|^{-(n-2)}), \qquad \mbox{as} \,\, |z| \to \infty.$$

\medskip

\begin{defn}[Type I and type II ancient solutions]\label{defn-ancient}
The solution $g = \bar \vp(\cdot, t)^{\frac 4{n-2}}\, g_0$ to \eqref{eq-YF}  is called ancient if it exists for all time $t\in (-\infty, T)$,
where $T < \infty$.   We will say that the ancient solution $g$ is
type I,  if its  Riemannian curvature  satisfies
$$\limsup_{t\to-\infty} \, ( |t| \, \max_{M_0} |\mbox{\em Rm}| \ \, (\cdot,t))< \infty.$$
An ancient  solution which is not type I,  will be called type
II.

\end{defn}

\smallskip

\noindent The simplest example  of  an ancient solution  to the Yamabe  flow on $S^n$ is 
the  family of {\em  contracting spheres}. 
They are  special solutions  $\bar \vp$   of \eqref{eq-YFS} which depend only on time $t$ and satisfy the ODE
$$\frac {d \bar \vp^{\p}}{dt} =-c_n\, \bar \vp.$$ They are given by
 \begin{equation}
\label{eq-spheres}
\bar \vp_{_{S}}(p,t) = \left ( \mmm \, c_n\,  (T-t) \right )^{\frac {n-2}4}, \qquad p \in S^n
\end{equation}
and  represent 
a sequence of round spheres shrinking to a point at time $t=T$.
They are shrinking solitons and  type I ancient solutions.

\smallskip

{\em  King  solutions:} They were  discovered
 by J.R. King \cite{K1}.  They can be expressed on $\R^n$ in closed from (after stereographic projection), namely 
 $g= \hat  \vp_{_{K}} (\cdot,t)^{\frac 4{n-2}} \, g_{_{\R^n}}$, where $\hat \vp_{_{K}}$ is the radial function
 \begin{equation}
\label{eq-king}
\hat \vp_{_{K}}(z,t) = \left(\frac{  a(t) }{1 + 2  b(t) \, |z|^2 + |z|^4}\right)^{\frac{n-2}{4}}, \qquad z \in \R^n 
\end{equation}
and the coefficients  $a(t)$ and $ b(t)$ satisfy  a certain system of ODEs.
The King solutions are {\em not
solitons} and  may be  visualized, as $t \to -\infty$,    as two Barenblatt self-similar solutions  ''glued'' together
to form a compact solution to the Yamabe  flow. They are type I ancient
solutions.

\medskip
Let us make the analogy  with the Ricci flow on $S^2$. The two  explicit compact ancient solutions to the two dimensional Ricci flow are the contracting spheres   and the King-Rosenau solution
\cite{K1}, \cite{K2}, \cite{R}. The latter one  is  the analogue of the King solution (\ref{eq-king}) of  the Yamabe flow. The difference is that the King-Rosenau Ricci flow ancient solution   is type II,   while
the King Yamabe flow solution  is type I.

It has been showed by Daskalopoulos, Hamilton and Sesum  \cite{DS1} that the spheres and the King-Rosenau solution  are the only compact ancient solutions  to the two dimensional Ricci flow. The natural question to raise is whether the analogous statement holds  true for the Yamabe flow, that is, whether the contracting  spheres and the King solution are the only compact ancient solutions to the Yamabe flow. This occurs not to be the case as the following discussion shows.

\medskip
Indeed, in \cite{DDS} the existence of a new class of type II   ancient radially symmetric solutions of the Yamabe flow \eqref{eq-YFS}
on $S^n$  was shown. These  new solutions, as $ t \to -\infty$, may be visualized as two spheres joined by a short neck. 
Their curvature operator changes sign. We will refer to them as {\em towers of moving bubbles}.

\medskip

Since the towers of moving bubbles are shown to be type II ancient solutions, while the contracting spheres and  the King solutions are of  type I, 
one may still ask whether the latter two  are
the only ancient compact type I solutions of  the Yamabe flow on $S^n$, equation \eqref{eq-YFS}. In this work we will observe that this is not the case,  
as will show  the existence
of other ancient compact type I solutions on $S^n$. 

\medskip

It is simpler to work  in cylindrical coordinates, so let us first describe the coordinate change. 
Let  $g=\hat \vp^{\frac 4{n-2}} (\cdot,t) \, g_{_{\R^n}}$ be  a radially symmetric  solution of \eqref{eq-ufd}.
For any $T >0$    the  cylindrical change of variables is given by 
\be
\label{eqn-cv1}
\vp(x,\tau )   = (T-t)^{-\frac 1{p-1}} r^{\frac 2{p-1}} \, \hat \vp(y,t)  , \quad x=\ln  |y|,\,  \tau  = - \ln (T-t).
\ee
In this language equation \eqref{eq-ufd}  becomes
\begin{equation}\label{eqn-vv}
(\vp^p)_\tau =  \vp_{xx} +  \alpha^{-1}  \vp^p -  \beta \vp, \quad \beta = \frac {(n-2)^2}4 ,\quad \alpha = \frac {p-1}p = \frac {n+2}4.
\end{equation}
By  suitable scaling we can make the two constants $\alpha$ and $\beta$ in \eqref{eqn-vv} equal to 1, so that from now on
we will consider  the equation
\begin{equation}\label{eqn-vp}
(\vp^p)_\tau =  \vp_{xx} +  \vp^p -   \vp.
\end{equation}
 
\medskip

It is well known (c.f. in   \cite{V}, Section 3.2.2 and \cite{DS2, DKS})  that for any given $\lambda \geq  0$  
 equation \eqref{eqn-vp} admits
an one parameter family  of traveling wave  solutions of the form 
$$\phi_\la(x,\tau)=\psi_\la(x-\lambda \, \tau), \qquad y:=x-\la\, \tau$$
which admit  the behavior    
\be
\label{eqn-behavior1}
\vps_\la(y)  = O(e^{y}), \quad \mbox{as}\,\,\  y \to - \infty.
\ee 
It follows that
$\vps:=\vps_\la$ satisfies the equation
\begin{equation}\label{eqn-vp2}
\vps_{yy} + \lambda \, (\vps^p)_y +  \vps^p  -  \vps =0
\end{equation}
and they are unique  up to translations of the self-similar variable $y$, given the \eqref{eqn-behavior1}.  The solutions 
$\vp_\la$ define  Yamabe shrinking solitons which correspond to smooth self-similar solutions of 
\eqref{eq-ufd}  when expressed  as  metrics on $\R^n$ (the smoothness follows from condition \eqref{eqn-behavior1}). 
It was shown in \cite{DS2} that they are type I ancient solutions. 
Solutions of \eqref{eqn-vp2} with  $\lambda =0$ correspond to  the steady states of equation \eqref{eqn-vp} and 
they  represent geometrically the standard metric  on the  sphere. 

When $\lambda >0$,  solutions to \eqref{eqn-vp2}  with behavior 
 \eqref{eqn-behavior1} define  smooth complete and non-compact Yamabe solitons
(shrinkers)  which all have {\em cylindrical behavior at infinity}, namely
$$\vps_\lambda(y) = 1 + o(1), \qquad \mbox{as}\,\, y \to + \infty.$$

In \cite{DKS} the asymptotic behavior, up to second order,   of these solutions was shown. 
Let  us next describe this behavior for the case $\la \geq 1$ which  will be heavily used in this work. 
For values of $\lambda$ in the range $0 < \lambda < 1$, the behavior
of the solutions $v_\lambda$ was also studied in \cite{DKS} and it is more complex as it differs for dimensions $3 \leq  N  \leq 6$ and $N  \geq 6$. 

When $\la \geq 1$, it is shown in  Theorem 1.1 in   \cite{DKS},   that    there exists 
a unique solution $\vps_{\la}$  of  \eqref{eqn-vp2} which is monotone increasing, satisfies 
\be
\label{eqn-vp0}
\vps_\la(0)= \frac 12
\ee
and has the asymptotic behavior 
\be
\label{eqn-vpla}
\vps_{\la}(y) = O(e^{y}), \quad \mbox{as}\,\,\  y \to - \infty \quad \mbox{and} \quad  \vps_{\la}(y) = 1 - C_\la\, e^{-\gamma_\la y} +
o(e^{-\gamma_\la y}), \quad \mbox{as}\,\,\  y \to +\infty
\ee
for some constants $\gamma_\la >0$ and $C_\la >0$ (depending on $\la$).  The exponent  $\gamma_\la$ satisfies the equation 
\be
\label{eqn-eqgamma}
\gamma^2 - \la p \gamma + (p-1) = 0
\ee
and for $\la >1$  is given by the smallest of the roots of this equation, that is
\begin{equation}
\label{eq-gamma-la}
\gamma_\la = \frac{\la p - \sqrt{\la^2 p^2 - 4 (p-1)}}{2} >0.
\end{equation}
When $\la=1$, equation \eqref{eqn-vp2} admits the  explicit {\em Barenblatt } solution 
\be\label{eqn-baren1}
\vps_1(y) = \left ( \frac 1{1+ c_p \, e^{-(p-1)\,y} }\right )^{1/(p-1)}, \qquad p-1=\frac 1{n-2}
\ee
with  $c_p$ chosen so that  $\vps_1(0)=1/2$. 

\medskip

We will establish  in this work the {\em existence of a five parameter family}  of solutions $\vp_\bla$ of \eqref{eqn-vp} with
$(\bla) \in \R^5$ and $\la, \la' >1$, $k \geq 0$. Let us next summarize our construction. 
First, note that because of the nonlinearity of  the time derivative in  \eqref{eqn-vp} it 
is more natural to define our  solutions  in terms of  the {\em pressure}  function
$$u:= \vp^{-\frac 4{n-2}}= \vp^{-(p-1)}$$
which satisfies the equation
\be\label{eqn-u}
p\, u_\tau = u\, u_{xx} - \frac p{p-1} \,  u_x^2 + (p-1) \, \big ( u^2 - u \big ). 
\ee
This becomes also apparent when one looks at  the King solutions \eqref{eq-king}, 
which in terms
of the pressure function $u:=\vp_{_{K}}^{-(p-1)}$ become polynomials in the radial  variable $r=|z|$. 

In terms of the pressure function,  equation  \eqref{eqn-u} 
admits an one parameter family (unique up to translations) of traveling wave  solutions  of the form $u_\lambda(x,\tau) = v_\la(x-\lambda \, \tau)$  with profile $v_\la = \vps_\la^{-(p-1)}$, with $\psi_\lambda$ the solution of \eqref{eqn-vp2} as described above. It follows that
$v:=v_\la$ satisfies the equation
\begin{equation}\label{eqn-v2}
v\, v_{yy}   - \frac p{p-1}  \, v_y^2  +  \lambda \, p\, v_y+  (p-1) \, (v^2 -  v) =0.
\end{equation}
Imposing the condition that 
$$v_\la(0)=2$$
which is equivalent to $\vps_\lambda(0)=1/2$, 
 it follows from  \eqref{eqn-vpla} that each $v_\la$ satisfies  the asymptotic behavior 
\be\label{eqn-vla}
v_{\la}(y) = O(e^{-(p-1)\, y}), \quad \mbox{as}\,\,\  y \to - \infty \quad \mbox{and} \quad  v_{\la}(y) = 1 + C_\la\, e^{-\gamma_\la y} +
o(e^{-\gamma_\la y}), \quad \mbox{as}\,\,\  y \to +\infty
\ee
with $\gamma_\la$ satisfying \eqref{eqn-eqgamma}-\eqref{eq-gamma-la} and $C_\la$ a constant  uniquely determined in terms of $\la$ and $p$.  
In addition each $v_\la$ is monotone decreasing in $y$, since it is known that $\vps_\la$ is monotone increasing. 
Notice also that in terms of the pressure function $\vp_1:=\vps_1^{-(p-1)}$ the Barenblatt  solution 
in  \eqref{eqn-baren1}  are given by
\be\label{eqn-baren2}
v_1(y)=1+ c_p \, e^{-(p-1)\,y}. 
\ee

\medskip

We will next describe the {\em building blocks}  in  our  construction of the five parameter family $u_\bla$ of ancient solutions of equation \eqref{eqn-u}, which will
be the main focus in this work.  Assume that 
\be
\label{eqn-ula0}
u_{\la,h}(x,\tau) := v_\la(x - \la \tau  + h)
\ee
is a traveling wave solution of \eqref{eqn-u} for a parameter $\la > 1$ and $h \in \R$
and let $u_{\la',h'}$ be another such solution  for a different choice of parameters $\la' >1$ and $h' \in \R$.  
Since equation \eqref{eqn-u} is invariant under reflection $x \to -x$, it follows that 
\be
\label{eqn-ula00}
\hat u_{\la',h'}(x,\tau) := u_{\la',h'}(-x,t) = v_\la(-x - \la' \tau + h')
\ee
is also a solution to \eqref{eqn-u}. It   corresponds to another  traveling wave of \eqref{eqn-v2} 
which travels in the opposite direction than $u_{\lambda',h'}$. 
The solution $u_{\la,h}(\cdot, \tau)$ is monotone decreasing in $x$ while $\hat u_{\la',h'}(\cdot, \tau)$ is monotone increasing.
Moreover, it  follows from \eqref{eqn-vla} that $u_{\la,h}, \hat u_{\la',h'}$ satisfy the asymptotics 
\be
\label{eqn-ula1}
u_{\la,h}(x,\tau) = O(e^{ - (p-1) \, x}), \,\,\,    \mbox{as}\,\,    x \to - \infty \qquad \mbox{and} 
\qquad   \hat u_{\la',h'}(x,\tau) = O(e^{ x \, (p-1) })
\, \, \,  \mbox{as}\, \,  x \to +\infty
\ee
and also 
\be
\label{eqn-ula2}
u_{\la,h}(x,\tau) = 1 + C_\la\, e^{-\gamma_\la (x-\la \tau+h)} +
o(e^{-\gamma_\la (x-\la \tau+h)}), \quad    \mbox{as}\, \,  x-\la \tau +h \to +\infty. 
\ee
and 
\be
\label{eqn-ula3}
 \hat u_{\la',h'}(x,\tau) = 1 + C_\la\, e^{\gamma_{\la'}  (x+\la' \tau-h)} + o(e^{\gamma_{\la'}(x+\la' \tau-h)}), \quad  \mbox{as}\,\,  x + \la' \tau -h  \to -\infty. 
\ee
with $\gamma_\la, \gamma_{\lambda'}$ given by \eqref{eq-gamma-la} and $C_{\lambda} >0, C_{\la'} >0$ depending  only on $\lambda$.  

Equation \eqref{eqn-u} also admits an one parameter family of space independent ancient solutions $\xi_k:=\xi_k(\tau)$ which are 
the solutions of the ode
\be\label{eqn-ode}
 \frac d{d\tau} \xi (\tau) =  \frac{p-1}{p} \, \big ( \xi^2(\tau) - \xi (\tau) \big )
\ee
and correspond to the {\em cylindrical } solution  of the Yamabe flow. 
Solving this equation gives 
\be\label{eqn-xik}
\xi_k(\tau) = \frac 1{ 1  - k  \, e^{\frac  {p-1}p\,  \tau }}. 
\ee
for a constant $k \in \R$. From now on  will take $k \geq 0$ which implies that  $\xi_k(\tau) \geq 1$ and $\xi_k(\tau) >1$, for $k >0$. 
Also, since $p >1$, we have
\be\label{eqn-xik2}
\xi_k(\tau) = 1 +  k  \, e^{\frac  {p-1}p\,  \tau } + O \big ( e^{\frac  {2 (p-1)}p\,  \tau } \big ), \qquad \mbox{as} \,\, \tau \to -\infty.
\ee
\medskip
We will show the existence of five    parameter class of ancient solutions $u_\bla$ of
equation \eqref{eqn-u} with $\la, \la'  >  1$, $k \geq 0$  and  $h, h' \in \R$, 
which as $\tau  \to -\infty$ may be visualized as the two traveling wave solutions,  $u_{\la,h}$ (traveling on the left) and $\hat u_{\la',h'}$ 
(traveling on the right) and a cylinder $\xi_k(\tau)$ in the middle.  In fact, we will show in  section \ref{sec-solutions} that for $\tau <<0$,  $u_\bla(\cdot,\tau)$ satisfies 
\be
\label{eqn-ubla0}
w^-_\bla \, (\cdot,\tau)  \leq u_\bla \, (\cdot, \tau)  \leq w_\bla^+ \, (\cdot, \tau)
\ee
with 
\be\label{eqn-vbla}
w_\bla^-:= \max \big ( v_\la(x-\la  \tau +h), \, \xi_k(\tau), \, v_{\la'}(- x - \la' \tau+h') \big )
\ee
and
\be\label{eqn-wbla}
w_\bla^+:= v_\la( x- \la  \tau \, (1 - q e^{ \frac {p-1}p \tau} )  +h) +  v_\la( - x- \la  \tau \, (1 - q  e^{ \frac {p-1}p\tau} )  +h) + \xi_k(\tau) - 2
\ee
with $q=q(p) >0$. 
It is clear that $w^-_\bla$ is a {\em subsolution}  of equation \eqref{eqn-u}. The main observation in our construction is that $w_\bla^+$ is a {\em supersolution }  of equation \eqref{eqn-u} with the appropriate choice of the parameter $q$. 

\medskip
Equivalently $\vp_\bla :=u_\bla^{- \frac 1{p-1}}$ defines a solution of \eqref{eqn-vp} and we recall  that $p-1=\frac 4{n-2}$. 
Let  $g_\bla(\tau):=  \vp_\bla^\mmm \, (\cdot,\tau) \, g_{cyl}$ denote the  metric on the 
cylinder $\R  \times S^{n-1}$ defined in terms of $\vp_\bla$,  where   $g_{cyl}:= dx^2 + g_{_{S^{n-1}}}$ denotes the standard  
cylindrical metric. We have seen  that \eqref{eqn-vp} is equivalent to $g_\bla$ 
satisfying the rescaled Yamabe flow $g_\tau= -(R - 1)g$. In addition we will show that  $\vp_\bla$  when lifted on $S^n$   defines a smooth ancient type I  solution to the Yamabe flow on $S^n \times (-\infty,T)$. 
Our main result is summarized as follows.

\begin{thm}\label{thm} For any $(\bla) \in \R^5$ such that $\la, \la' >1$, $k \geq 0$,  there exists an ancient solution $\vp_\bla$ of \eqref{eqn-vp} 
defined on $\R \times (-\infty,T)$,  for some $T=T_\bla \in (-\infty, +\infty]$, which as $\tau  \to -\infty$ may be visualized as the two traveling wave solutions,  $u_{\la,h}$ (traveling on the left) and $\hat u_{\la',h'}$ 
(traveling on the right) and a cylinder $\xi_k(\tau)$ in the middle. More precisely, it satisfies the bound 
$$
(w^-_\bla)^{-\frac {n-2}4} \, (\cdot, \tau) \leq \vp_\bla \, (\cdot, \tau)  \leq (w_\bla^+)^{-\frac {n-2}4} (\cdot, \tau), \qquad \mbox{for} \,\, \tau <<0. 
$$
The metric $g_\bla(\tau) := \vp^\mmm_\bla \, (\cdot, \tau) \, g_{cyl}$  when lifted on $S^n$   defines a smooth ancient  solution of the rescaled Yamabe flow $g_\tau = -(R-1)\, g$,
on $S^n \times  (-\infty, T)$. This is a  type I ancient solution in the sense that the norm of its curvature operator  is  uniformly bounded in time  $- \infty < \tau \leq \tau_0$, for all $\tau_0 < T$. 
\end{thm}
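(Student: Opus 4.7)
The strategy is to construct $\varphi_{\bar\lambda}$ at the level of the pressure $u=\varphi^{-(p-1)}$, which satisfies the quasilinear equation \eqref{eqn-u}. Since every ingredient of $w^\pm_{\bar\lambda}$ is bounded below by $1$, this equation is uniformly parabolic on the relevant region and admits a comparison principle for ordered classical sub/supersolutions. I would therefore organize the argument around the two candidate barriers $w^-_{\bar\lambda}$ and $w^+_{\bar\lambda}$ introduced in \eqref{eqn-vbla}--\eqref{eqn-wbla}, use them to trap a sequence of approximating Cauchy problems, and then pass to the limit as the initial time recedes to $-\infty$.

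\emph{Step 1: the barriers.} The function $w^-_{\bar\lambda}$ is the pointwise maximum of three exact solutions of \eqref{eqn-u} (the two traveling waves $u_{\lambda,h}$, $\hat u_{\lambda',h'}$ and the cylinder $\xi_k$), hence a viscosity subsolution, which is enough for the comparison step below. The nontrivial calculation is that $w^+_{\bar\lambda}$ is a supersolution for $\tau\ll 0$, provided $q=q(p)>0$ is chosen appropriately. Writing $w^+_{\bar\lambda}=A+B+C-2$ with $A,B$ the two perturbed traveling waves and $C=\xi_k(\tau)$, one substitutes into the operator $\mathcal{P}[u]:=pu_\tau-uu_{xx}+\frac{p}{p-1}u_x^2-(p-1)(u^2-u)$ and uses $\mathcal{P}[A]=\mathcal{P}[B]=\mathcal{P}[C]=0$ on the unperturbed pieces to split the result into (i) the \emph{shift error} produced by replacing $\lambda\tau$ with $\lambda\tau(1-qe^{\frac{p-1}{p}\tau})$, and (ii) a \emph{cross error} from the algebraic nonlinearities $u^2$ and $uu_{xx}$. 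Because $v_\lambda(y)\to 1$ exponentially in $y$ at $+\infty$ with rate $\gamma_\lambda$ from \eqref{eqn-vpla}, the cross error is $O(e^{\frac{p-1}{p}\tau})$ on the overlap region $|x|\lesssim |\tau|$, and a direct expansion shows that the shift error contributes a term of the form $-q\,\frac{p-1}{p}\,\lambda\,(v_\lambda)_y\,e^{\frac{p-1}{p}\tau}$ (plus analogous term for $B$) that is strictly positive and of the same order. Choosing $q>0$ large enough absorbs the cross error uniformly for $\tau\ll 0$. This is the main technical step.

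\emph{Step 2: approximating Cauchy problems and limit.} Fix a sequence $\tau_n\to-\infty$ and solve \eqref{eqn-u} forward in time with initial data $u^{(n)}(\cdot,\tau_n):=w^+_{\bar\lambda}(\cdot,\tau_n)$. Because $w^-_{\bar\lambda}\le u^{(n)}\le w^+_{\bar\lambda}$ at $\tau=\tau_n$ and because both functions are sub/supersolutions on $[\tau_n,\tau_0]$ for every $\tau_0<0$ with $|\tau_0|$ large, comparison yields
\be
w^-_{\bar\lambda}(x,\tau)\le u^{(n)}(x,\tau)\le w^+_{\bar\lambda}(x,\tau),\qquad (x,\tau)\in\mathbb{R}\times[\tau_n,\tau_0].
\ee
These uniform two-sided bounds keep the equation uniformly parabolic with smooth coefficients on compact sets, so standard interior Schauder estimates provide $C^{k,\alpha}_{loc}$ bounds independent of $n$. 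Passing to a subsequential limit yields an ancient solution $u_{\bar\lambda}$ on $\mathbb{R}\times(-\infty,\tau_0]$ still satisfying the same two-sided barrier. Since $\tau_0$ was arbitrary, and since the forward-in-time flow starting from $u_{\bar\lambda}(\cdot,\tau_0)$ exists on a maximal interval $(\tau_0,T)$ with $T=T_{\bar\lambda}\in(\tau_0,+\infty]$ by the standard theory for \eqref{eqn-YFS}, concatenation yields the solution on $\mathbb{R}\times(-\infty,T)$. Setting $\varphi_{\bar\lambda}=u_{\bar\lambda}^{-1/(p-1)}$ produces the sought solution of \eqref{eqn-vp} obeying the pinching stated in the theorem.

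\emph{Step 3: smooth lift to $S^n$ and type I bound.} The behavior of $w^+_{\bar\lambda}$ forces $u_{\bar\lambda}(x,\tau)\to\infty$ like $e^{(p-1)|x|}$ as $x\to\pm\infty$ at the rates dictated by \eqref{eqn-ula1}, which is exactly the rate needed for $\varphi_{\bar\lambda}^{-(p-1)}$ to correspond, through the inverse of the change of variables \eqref{eqn-cv1} and stereographic projection, to a function which is $C^\infty$ at the two poles of $S^n$; the matching of asymptotic constants from the explicit form of $v_\lambda, v_{\lambda'}$ ensures smoothness to all orders. Hence the metric $g_{\bar\lambda}(\tau)=\varphi_{\bar\lambda}^{4/(n-2)}\,g_{cyl}$ lifts to a smooth Yamabe flow solution on $S^n\times(-\infty,T)$. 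Finally, for the type I curvature estimate, one uses that on any interval $(-\infty,\tau_0]$ the pinching $w^-\le u_{\bar\lambda}\le w^+$ together with the uniform $C^{k}_{loc}$-bounds from Step 2 implies $\|\mathrm{Rm}\|_{g_{\bar\lambda}(\tau)}$ is uniformly bounded in time; rescaling back to the unrescaled Yamabe flow via $\tau=-\ln(T-t)$ turns this into the type I bound $|t|\,\max|\mathrm{Rm}|(\cdot,t)\le C$ as $t\to-\infty$.

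The principal obstacle is Step 1: the verification that $w^+_{\bar\lambda}$ is a supersolution requires tracking the competing contributions coming from the traveling waves' tails $v_\lambda(y)-1\sim C_\lambda e^{-\gamma_\lambda y}$, the cylinder's expansion $\xi_k(\tau)-1\sim k\,e^{\frac{p-1}{p}\tau}$, and the imposed time-dependent shift $q\,e^{\frac{p-1}{p}\tau}$; the correct matching of these three rates, which is what forces the exponent $\frac{p-1}{p}$ in \eqref{eqn-wbla}, is what makes the choice of $q=q(p)$ possible and is where the whole construction succeeds or fails.
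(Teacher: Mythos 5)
Your proposal follows essentially the same route as the paper: Lemma \ref{lem-super} proves that $w^+_\bla$ is a supersolution via a three-region case analysis of \eqref{eq-Lw}, in which the favorable shift term $-pq\la(\tau e^{\frac{p-1}{p}\tau})'(w_1)_x$ dominates the cross errors because it carries an extra factor of $|\tau|$ (so any fixed $q>0$ suffices once $\tau_0$ is taken negative enough, rather than $q$ being taken large as you suggest); Theorem \ref{thm-ubla} is exactly your Step 2 with initial data $w^+_\bla(\cdot,-m)$ at time $-m\to-\infty$; and Corollary \ref{cor-phibla} with Theorem \ref{thm2} give the smooth lift to $S^n$ and the type I bound by combining the two-sided barrier with parabolic regularity in the tip region (in polar coordinates) and in the cylindrical region. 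Apart from that minor difference in how the supersolution inequality is closed, the decomposition, the key lemma, and the limiting argument all coincide with the paper's.
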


\smallskip

\bremark  It follows from Theorem \ref{thm} that  the corresponding solution to the unrescaled Yamabe flow \eqref{eq-YF} is a type I ancient solution in the sense of Definition \ref{defn-ancient}.

\eremark

\bremark[Similarities with the KPP equation and further discussion] Equation \eqref{eqn-vp}  resembles  the well known  semilinear KPP equation
\be\label{eqn-kpp} 
\vp_\tau = \vp_{xx} + f(\vp)
\ee
for a nonlinearity $f(\vp)$ which satisfies certain growth assumptions including $f(\vp)= \vp^p-\vp$ as in \eqref{eqn-vp}.  It is well known that equation \eqref{eqn-kpp}
possesses a family of traveling wave solutions $\psi_\la$, $\la \geq \la_*$ with similar behavior as those of equation \eqref{eqn-vp2} described above. 
F. Hamel and N. Nadirashvili   showed  in \cite{HN} the existence of  ancient solutions $\vp_\bla$ to \eqref{eqn-kpp} which are similar to
 those in   Theorem \ref{thm}. The method in \cite{HN}   exploits  the semilinear character of equation \eqref{eqn-kpp}. The error
of  approximation is estimated in a rather precise manner  by the corresponding  solution of  the linear equation 
$\zeta_\tau = \zeta_{xx} + f'(0)\, \zeta.$
This is done simply by using the heat kernel.  The same method cannot be applied to the  quasilinear equation  \eqref{eqn-vp}, which also  becomes {\em singular}  as
$x \to \pm \infty$. Despite the singular nature of our equation,  we are still able to obtain similar precise bounds  as in \cite{HN} by the construction of the appropriate  super-sub solutions. 

It would be interesting to  explore whether  similar  methods,  using barriers,  can be used  to provide   
the  construction  of  ancient solutions from solitons (self-similar solutions)  in  other parabolic PDE and in particular in  geometric flows.

\eremark

In our  previous  work  \cite{DDKS} we showed the existence of a four parameter family $g_{\la,\la',h,h'}$ of type I ancient solutions of the Yamabe flow
 \eqref{eq-YF}.   These 
solutions correspond  to the case $k=0$ in our five parameter family $g_\bla$ which is also covered by Theorem \ref{thm}. 
The methods of the construction in \cite{DDKS},  which are based on integral bounds and a'priori estimates, are
different than in this work. Our super-sub solution method of this work allows us to obtain much 
sharper  bounds on the solution as $\tau \to -\infty$. These bounds are  similar to those for the KPP equation in \cite{HN}.

\smallskip

The {\em outline}  of the paper is as follows. In section \ref{sec-king} we will review the construction  of the  King solutions  proven 
to exist in \cite{K1} and show their relevance 
to our construction. In   section \ref{sec-super} we will show that $w_\bla^+$ given by \eqref{eqn-wbla} defines a supersolution of equation \eqref{eqn-u} 
for an appropriate choice of parameters $q$ and $d$. This will constitute the main step in our proof. Section \ref{sec-solutions} will
be devoted to the proof of the existence of the ancient  solution $u_\bla$ as stated in Theorem \ref{thm}. In the final section \ref{sec-geom} we will show that our solution defines  a  type I ancient solution of the Yamabe flow.

\section{The King solutions}
\label{sec-king}

In this section we will review the existence and properties of  the  King solutions found in \cite{K1} and show the
similarities with our construction of $u_\bla$ as stated in Theorem \ref{thm}.  

One looks for a radially symmetric solution $\hat \vp_{_{K}}(z,t)$ of equation \eqref{eq-ufd} such that the pressure 
function $\hat u_{{_K}}:= \hat \vp_{_{K}}^{-(p-1)}$ is a polynomial in $r=|z|$ with coefficients depending on time $t$. It turns out that the polynomial 
is of degree four, that is,  $\hat u_{{_K}}$ has  the form
$$\hat u_{_{K}}(r,t):=  a(t)^{-1} \, ( 1 + 2  b (t)\,  r^2 + r^4), \qquad r=|z|, \,\, z \in \R^n$$
where  $ a(t),  b (t)$ are not defined explicitly but satisfy a system of ode's (c.f. in \cite{K1} and \cite{S3}). In terms of the rescaled 
variables and in cylindrical coordinates the King solutions are defined by a  pressure function of the form     
\be\label{eqn-king4}
\uk(x,\tau) = \xi(\tau)  +  \zeta(\tau)\, \cosh \, ((p-1) x), \qquad x \in \R
\ee
for some positive  functions $\zeta(\tau), \xi(\tau)$ of rescaled time $\tau$. The function $\uk$  satisfies  equation  \eqref{eqn-u} and a  direct calculation shows that
this is equivalent to the system of ode's 
\be\label{eqn-system}
\begin{cases} \, \, p\, \xi'(\tau) &=  p(p-1)\, \zeta^2(\tau) + (p-1)\, \big (\xi^2(\tau) - \xi(\tau) \big )\\
\,\,  p\, \zeta' (\tau) &=   (p-1)\,   \zeta(\tau)  \, \big ( -1   +   (p+1)  \, \xi(\tau) \big ) . 
\end{cases} 
\ee
In the case where $\zeta(\tau) \equiv 0$, the above system is equivalent to the ode \eqref{eqn-ode} 
which corresponds to space independent solutions of \eqref{eqn-u}. The non-trivial solution  
$\xi(\tau)$ of this equation  is given by \eqref{eqn-xik} for a parameter $k \geq 0$ and geometrically 
corresponds to cylinders evolving by the Yamabe flow. 

In all other cases, a simple analysis shows that 
\be\label{eqn-approx} 
\xi(\tau) = 1+ k\, e^{\frac {p-1}p \tau}  +  o( e^{\frac {p-1}p \tau} ), \qquad \zeta(\tau) = c\, e^{(p-1)\tau}  +
o(e^{(p-1)\tau}),
\ee
for some  parameters $k >0, c >0$. 

\medskip

Now let us relate the King solutions to the Barenblatt self-similar solutions \eqref{eqn-baren2} and our approximating supersolutions $w_\bla$ given by
\eqref{eqn-wbla}.   Consider the approximating solutions
defined by 
$$w_{K,h,k}(x,\tau) = v_1(x -\tau-h) + v_1(-x - \tau - h) + \xi_k(\tau) -2$$
with $v_1$ and $\xi_k$  given by \eqref{eqn-baren2} and \eqref{eqn-xik} respectively.  It follows that 
\be w_{K,h,k}(x,\tau) = c  \, e^{ (p-1)\, \tau} \, \cosh ((p-1) \, x) + \xi_k(\tau)
\ee
for a parameter $c$ depending on $h$. Using the asymptotics \eqref{eqn-xik2} it follows that 
$w_{K,h,k}(x,\tau)$ is the first order approximation of  \eqref{eqn-king4} as shown in \eqref{eqn-approx}.  
This shows that that  our approximations  $w_\bla$  given by \eqref{eqn-wbla}  is a natural  extension of the
King solutions to the case of $\lambda >1$ where the traveling waves (solitons) $v_\la$ are not given in
closed form. We will see in the next section that the exponentially small term $  \la q \, e^{ \frac {p-1}p \tau}$ 
which is subtracted from  the traveling wave speed $\lambda$ of the traveling wave $v_\lambda$ in the definition \eqref{eqn-wbla},
plays an essential role into making $w_\bla^+$ to be a supersolution.

\section{The supersolution $w_\bla^+$}
\label{sec-super}

In this section we will show that 
\be\label{eqn-wbla4}
w_\bla^+:= v_\la( x- \la  \tau \, (1 - q \, e^{ \frac {p-1}p \tau} )  +h) +  v_\la( - x- \la  \tau \, (1 - q  e^{ \frac {p-1}p\tau} )  +h) + \xi_k(\tau) - 2
\ee
is a supersolution of equation \eqref{eqn-u} for every $(\bla) \in \R^5$ with $\la, \la' >1$, $k \geq 0$ and  $h, h' \in \R$ if   $q=q(p) >0$ is chosen appropriately.  

\smallskip
To simplify the notation, we will fix for the remainder of this section $(\bla) \in \R^5$ as above and we  will simply denote by $w:=w_\bla^+$ and 
\[w_1(x,\tau) := v_{\la}(x - \la \tau (1 - q e^{\tau/2}) + h), \,\,\,  w_2 (\tau) := \xi_k(\tau)-1,  \,\,\, w_3(x,\tau) := v_{\la'} (-x - \la'\tau( 1 - q e^{\tau/2}) + h').\]
Also, we will set
\be\label{eqn-zbz}
z := x - \la \tau (1 - q e^{\frac{p-1}{p}\, \tau}) + h \quad \mbox{and} \quad \bar{z} := -x - \la'\tau (1 - q e^{\frac{p-1}{p}\, \tau}) + h'.
\ee
We recall that $v_\la, v_{\la'}$ satisfy \eqref{eqn-v2} and $\xi_k$ satisfies \eqref{eqn-ode}. It follows that $w_2:=\xi_k-1$ satisfies the ode
\begin{equation}
\label{eq-v2}
p\,(w_2)_{\tau} = (p-1) \big ( w_2^2 + w_2).
\end{equation} 
We then have by \eqref{eqn-wbla4} that 
\begin{equation}
\label{eq-def-w}
w :=  w_1 + w_2 + w_3 - 1.
\end{equation}
Define next the operator
\[L(w) := w\,  w_{xx} - \frac{p}{p-1}\,  w_x^2 + (p-1) \, \big (w^2 - w)  - p\, w_{\tau}.\]
A direct computation shows that 
\bee
\begin{split}
L(w) &= \left(\sum_{i=1}^3 w_i - 1\right)\, \sum_{i=1}^3 (w_i)_{xx} - \frac{p}{p-1}\left(\sum_{i=1}^3 (w_i)_x\right)^2 + (p-1)\, \left(\sum_{i=1}^3 w_i - 1\right)^2 \\
&- (p-1)\, \left(\sum_{i=1}^3 w_i - 1\right) - p\, \left(\sum_{i=1}^3 (w_i)_{\tau}\right) \\
&= \sum_{i,j=1}^3 w_i (w_j)_{xx} - \sum_{i=1}^3 (w_i)_{xx} - \frac{p}{p-1}\, \sum_{i,j=1}^3 (w_i)_x (w_j)_x + (p-1)\, \sum_{i,j=1}^3 w_i w_j  \\
&- 3\, (p-1)\sum_{i=1}^3 w_i + 2\, (p-1) - p\, \left(\sum_{i=1}^3 (w_i)_{\tau}\right).
\end{split}
\eee
Since 
$$\sum_{i=1}^3 (w_i)_{\tau} = \sum_{i=1,3} (w_i)_{\tau} + (w_2)_\tau = 
(w_2)_{\tau} -  (\la  v_\la'  + \la' \, v_{\la'} ) + 
q \, \la \, (\tau e^{\frac{p-1}{p}\, \tau})' \, v_\la'  +   q \, \la' \, (\tau e^{\frac{p-1}{p}\, \tau})' \, v_{\la'}'$$
and $(w_1)_x=v_\la'$, $(w_3)_x = -v_{\la'}'$  
we have 
$$\sum_{i=1}^3 (w_i)_{\tau} =  (w_2)_{\tau} -  (\la  (w_1)_x  -  \la' \, (w_3)_x) + 
q \, \la \, (\tau e^{\frac{p-1}{p}\, \tau})' \, (w_1)_x   -    q \, \la' \, (\tau e^{\frac{p-1}{p}\, \tau})' \, (w_3)_x.$$
Hence, 
\bee
\begin{split}
L(w) = &\sum_{i,j=1}^3 w_i (w_j)_{xx} - \sum_{i=1}^3 (w_i)_{xx} - \frac{p}{p-1}\, \sum_{i,j=1}^3 (w_i)_x (w_j)_x + (p-1)\, \sum_{i,j=1}^3 w_i w_j  \\
&- 3 \, (p-1)\sum_{i=1}^3 w_i + 2(p-1) - p\, (w_2)_{\tau} + p \, ( \la \, (w_1)_x - \la' \, (w_3)_x )\\
&-p q \, \la \, (\tau e^{\frac{p-1}{p}\, \tau})' \, (w_1)_x + p q \, \la'\,  (\tau e^{\frac{p-1}{p}\, \tau})' \, (w_3)_x. 
\end{split}
\eee
Since both  $v_\la, v_{\la'}$ satisfy \eqref{eqn-v2} and  $w_2$ satisfies \eqref{eq-v2}, the above yields
\bee
\begin{split}
L(w) &= w_1 (w_3)_{xx} + w_3 (w_1)_{xx} + (w_2 - 1)\, ((w_1)_{xx} + (w_3)_{xx}) - \frac{2p}{p-1} \, (w_1)_x (w_3)_x \\
&+ 2(p-1)\, (w_1 w_3 - 1) - 2 (p-1)\, \sum_{i=1,3} (w_i - 1) + 2 (p-1) \, w_2   (w_1 + w_3)  \\
 &-  4 (p-1) \, w_2 - p q \, \la \,  (\tau e^{\frac{p-1}{p}\, \tau})' \, (w_1)_x + p q \, \la' \, (\tau e^{\frac{p-1}{p}\, \tau})' \, (w_3)_x.
\end{split}
\eee
Finally,  
\begin{equation}
\label{eq-Lw}
\begin{split}
L(w) = & (w_3)_{xx} (w_1 + w_2 - 1) + (w_1)_{xx} (w_3 + w_2 - 1)
 - \frac{2p}{p-1} (w_1)_x (w_3)_x \\
&+ 2(p-1) (w_1 - 1)(w_3 -1) + 2(p-1) \, w_2 \, (w_1 + w_3 - 2) \\
&- p q \, \la \, (\tau e^{\frac{p-1}{p}\, \tau})' \, (w_1)_x + p q \, \la' \, (\tau e^{\frac{p-1}{p}\, \tau})' \,  (w_3)_x.
\end{split}
\end{equation}
Notice that $(w_1)_x=v_\la' \geq 0$ and $(w_2)_x=-v_{\la'}' \leq 0$, which implies that the last two   terms 
in \eqref{eq-Lw} are negative. We will see in what follows that those two terms play a crucial role for making
$w$ a supersolution. In fact, we  will next use \eqref{eq-Lw} to show that $w$ is a supersolution of  \eqref{eqn-u}.

\begin{lemma}
\label{lem-super}
There exists a number $\tau_0 <<0$ depending on $p$ and $\bla$ for which 
the function $w:=w_\bla^+$  defined as in \eqref{eq-def-w} is a supersolution  of equation \eqref{eqn-u},
 that is $Lw \le 0$ on $\R \times (-\infty, \tau_0]$.
\end{lemma}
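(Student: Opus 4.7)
My plan is to prove the lemma directly from the explicit identity \eqref{eq-Lw} by verifying that, once $q=q(p)>0$ is chosen appropriately, the two ``$q$-terms'' on the last line of \eqref{eq-Lw} pointwise dominate the remaining contributions for $\tau$ sufficiently negative. I would begin by pinning down the signs of the building blocks: since $\psi_\lambda$ is monotone increasing to $1$, the pressure profile $v_\lambda=\psi_\lambda^{-(p-1)}$ is monotone decreasing to $1$ from above, so $w_1,w_3\ge 1$, $(w_1)_x=v_\lambda'(z)\le 0$, $(w_3)_x=-v_{\lambda'}'(\bar z)\ge 0$, and $\xi_k\ge 1$ gives $w_2\ge 0$. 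Hence the products $(w_1-1)(w_3-1)$, $w_2(w_1+w_3-2)$ and $-(w_1)_x(w_3)_x$ in \eqref{eq-Lw} are nonnegative, while $(\tau e^{\frac{p-1}{p}\tau})'=e^{\frac{p-1}{p}\tau}\bigl(1+\tfrac{p-1}{p}\tau\bigr)<0$ once $|\tau|$ is large, so the two $q$-terms are strictly negative; the mixed terms $(w_i)_{xx}(\cdots)$ I would handle via the ODE \eqref{eqn-v2}, which expresses $v_\lambda''$ in terms of $v_\lambda$ and $v_\lambda'$, fitting them into the asymptotic bookkeeping below.

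I would then partition $\R\times(-\infty,\tau_0]$ into overlapping regions. In the \emph{inner region}, where both $z$ and $\bar z$ are large (which, since $-\lambda\tau,-\lambda'\tau\to+\infty$, contains every fixed bounded $x$-interval once $|\tau|$ is large), the asymptotics \eqref{eqn-vla} give that every spatial factor in the first five terms of \eqref{eq-Lw} is $O(e^{-\gamma_\lambda z})$ or $O(e^{-\gamma_{\lambda'}\bar z})$, and $w_2=O(e^{\frac{p-1}{p}\tau})$; each ``bad'' term is a \emph{product} of two such small factors and is therefore $O(e^{(\gamma_\lambda\lambda+\gamma_{\lambda'}\lambda')\tau})$ or $O(e^{(\frac{p-1}{p}+\gamma_\lambda\lambda)\tau})$, whereas the $q$-terms carry only a single spatial factor times the extra weight $|\tau|\,e^{\frac{p-1}{p}\tau}$. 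The inequalities $\gamma_\lambda\lambda\ge\frac{p-1}{p}$ and $\gamma_{\lambda'}\lambda'\ge\frac{p-1}{p}$ for $\lambda,\lambda'\ge 1$, which follow by solving the quadratic \eqref{eqn-eqgamma}, guarantee that each bad term is of strictly smaller order than the matched $q$-term by at least a factor $|\tau|^{-1}$. In each of the two \emph{outer regions}, say the one with $z$ bounded and $\bar z\to+\infty$ uniformly, all terms with a $(w_3-1)$, $(w_3)_x$ or $(w_3)_{xx}$ factor are $O(e^{\gamma_{\lambda'}(\lambda+\lambda')\tau})$ and negligible; the surviving bad contributions $w_2[\cdots]$ are of size $e^{\frac{p-1}{p}\tau}$ times a bounded factor, and are compared with $-pq\lambda(\tau e^{\frac{p-1}{p}\tau})'(w_1)_x$, whose size is $q\,|\tau|\,e^{\frac{p-1}{p}\tau}|v_\lambda'(z)|$. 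On the far tail $z\to-\infty$ both the bad term and the $q$-term scale like $e^{-(p-1)z}$ (as is seen by using the ODE \eqref{eqn-v2} to write $v_\lambda''+2(p-1)(v_\lambda-1)$ in terms of $v_\lambda$ and $v_\lambda'$), so the comparison reduces again to $q\,|\tau|$ versus a pure constant.

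The main obstacle is to stitch these three regional estimates together uniformly, including on the overlaps. Concretely, I would first fix $q=q(p)>0$ large enough that the outer-region comparison closes with a definite constant margin, and then choose $\tau_0=\tau_0(p,\bla)\ll 0$ so that the $o(\cdot)$ errors in \eqref{eqn-vla} and \eqref{eqn-xik2} are absorbed into the leading-order estimates from the inner region, and the factor $|\tau|$ supplied by the $q$-terms overpowers all the accumulated constants. The resulting pointwise inequality $Lw\le 0$ on $\R\times(-\infty,\tau_0]$ is precisely the supersolution property claimed in Lemma~\ref{lem-super}.
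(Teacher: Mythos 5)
Your proposal is correct and follows essentially the same route as the paper: starting from the identity \eqref{eq-Lw}, you split $\R\times(-\infty,\tau_0]$ into a tip region, a transition region and an inner region (the paper organizes these as three cases in $z$ after reducing to $x\le x(\tau)$ by symmetry, $x(\tau)$ being the crossover point of $w_1$ and $w_3$), and in each region you show that every remaining term, being a product of two small factors, is $o(|\tau|)$ relative to the negative $q$-term sharing one of those factors, using exactly the inequalities $\lambda\gamma_\lambda\ge\frac{p-1}{p}$ and $\lambda'\gamma_{\lambda'}\ge\frac{p-1}{p}$ as the paper does. One cosmetic caveat: the uniform bound $O\bigl(e^{(\gamma_\lambda\lambda+\gamma_{\lambda'}\lambda')\tau}\bigr)$ you quote for the cross terms is only attained near $x(\tau)$ and is not valid pointwise throughout the inner region, but since you correctly match each cross term with whichever $q$-term shares its larger factor, this does not affect the argument.
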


\begin{proof}
The monotonicity of $v_\la, v_{\la'}$ implies that $w_1(x,\tau)$ is monotone decreasing in $x$ and $w_2(x,\tau)$ is monotone 
increasing in $x$.  Hence, the asymptotic behavior in \eqref{eqn-vla} implies that for each $\tau <<0$ there
exists a unique $x(\tau) \in \R$ at which   $w_1(x(\tau),\tau) =w_2 (x(\tau),\tau)$. In addtion,  it follows from \eqref{eqn-vla} that  
as $\tau \to -\infty$, the point $x(\tau)$ satisfies  the asymptotic behavior
\begin{equation}
\label{eq-intersection}
x(\tau) =  \frac{ \gamma_{\lambda} - \gamma_{\lambda'}}{p} \, \tau 
+ \frac{1}{\gamma_{\lambda} + \gamma_{\lambda'}}\, \left(  \ln\frac{C_{\lambda}}{C_{\lambda'}} 
+ h'\gamma_{\lambda'} - h\gamma_{\lambda} \right )  + o(1).
\end{equation}
and that  at $x=x(\tau)$ we have 
\be
\label{eqn-u12} 
w_1(x(\tau), \tau) = w_2(x(\tau), \tau) = 1- C_\bla  \, e^{d\, \tau} + o(e^{d\tau})
\ee
with
\be
\label{eq-d}
d:= \frac {\gamma_{\lambda} \gamma_{\lambda'} + (p-1)}p.
\ee
and $C_\bla$ depending on $\bla$. 

\smallskip
Trying to show that $Lw \le 0$,  for all $x  \in \R $, $\tau \leq \tau_0 <<0$, we observe first that by symmetry it is sufficient to
consider the region $x \leq x(\tau)$, $\tau \leq \tau_0$. Recalling that $z:=x -\la\tau(1 - q e^{\frac{p-1}{p}\, \tau}) + h$, 
we will next  distinguish between the three different cases  $z \le -M$, $-M \le z \le M$ and $M \le z \le x(\tau) - \la\tau(1 - q e^{\frac{p-1}{p}\, \tau}) + h$.

\begin{case}
$z \le -M$.
\end{case}

This implies $x\le -M + \la\tau (1 - q e^{\frac{p-1}{p}\, \tau}) - h$ and hence $\bar{z} \ge M - (\la + \la')\tau\, (1 - qe^{\frac{p-1}{p}\, \tau}) + h + h'$. 
By \eqref{eqn-ula1} we have that in the considered region the following asymptotics (up to a constant, whether positive or negative) hold
\[w_1(x,\tau) \sim e^{-(p-1) z}, \qquad w_2(\tau) = k e^{\frac{p-1}{p}\, \tau} + o(e^{\frac{p-1}{p}\, \tau}), \qquad w_3(x,\tau) \sim 1 + 
C_p e^{-\gamma_{\la'} \bar{z}}.\]
As in \cite{DDKS} we can argue that up to a time independent constant (whether positive or negative), in the considered region we have
\begin{equation}
\label{eq-beh-w}
w_1 \sim (w_1)_x, \qquad (w_1)_{xx} \sim (w_1)_x, \qquad (w_3)_x \sim e^{-\gamma_{\la'}\bar{z}}, \qquad (w_3)_{xx} \sim e^{-\gamma_{\la'}\bar{z}}.
\end{equation}
On the other hand, since $\bar{z} \ge M - (\la + \la')\tau\, (1 - qe^{\frac{p-1}{p}\, \tau}) + h + h'$, we have 
\begin{equation}
\label{eq-gamma-la}
e^{-\gamma_{\la'}\bar{z}} \le C\, e^{(\la + \la')\gamma_{\la'}\, \tau} = o(e^{\la'\gamma_{\la'}\tau}) = o(e^{\frac{p-1}{p}\,\tau})
\end{equation}
where the last equality holds because  $\la'\gamma_{\la'} > \frac{p-1}{p}$. 

Lets now analyze terms in \eqref{eq-Lw}. We claim the prevailing term is $-(p - 1)q\la \tau e^{\frac{p-1}{p}\, \tau} (w_1)_x < 0$, and that the others can be absorbed in this one, at least when $\tau <<0$.  It turns out the constants (whether positive or negative)  for the other terms in \eqref{eq-Lw} are not important and hence we will only emphasize the behavior. Using \eqref{eq-beh-w} and \eqref{eq-gamma-la} we get
\[(w_3)_{xx}\, (w_1 + w_2 - 1) \sim (w_3)_{xx} w_1 \sim o(e^{\frac{p-1}{p}\, \tau}) (w_1)_x \]
and 
\[(w_1)_{xx}\, (w_3 + w_2 - 1) \sim (w_1)_x  (w_3 - 1) + (w_1)_x w_2 = O(e^{\frac{p-1}{p}\, \tau})\, (w_1)_x.\]
Similarly, we get
\[- \frac{2p}{p-1} (w_1)_x (w_3)_x + 2(p-1) (w_1 - 1)(w_3 -1) + 2(p-1) w_2 (w_1 + w_3 - 2)
= O(e^{\frac{p-1}{p}\, \tau}) \, (w_1)_x.\]
Combining the above finally yields 
\[L(w) \le -(p-1)\, q  \la \, \tau e^{\frac{p-1}{p}\, \tau}  \, (w_1)_x + o(\tau e^{\frac{p-1}{p}\,\tau}  \,  (w_1)_x) <  0\]
for $\tau <<0$.

\begin{case}
$-M \le z \le M$.
\end{case}

This is equivalent to  $-M + \la\tau (1 - q e^{\frac{p-1}{p}\,\tau}) - h \le x \le M + \la\tau (1 - q e^{\frac{p-1}{p}\,\tau}) - h$ which implies that $\bar{z} \ge -M - (\la + \la')\, \tau (1 - q e^{\frac{p-1}{p}\,\tau}) + h + h'$.  By \eqref{eqn-ula1}, in the considered region we have
\[a \le w_1(x,\tau) \le b, \quad w_2(\tau) = k e^{\frac{p-1}{p}\, \tau} + o(e^{\frac{p-1}{p}\tau}), \quad w_3(x,\tau) \sim 1 + C_p e^{-\gamma_{\la'}\bar{z}},
\quad e^{-\gamma_{\la'}\bar{z}} = o(e^{\frac{p-1}{p}\, \tau})\]
for some constants $a, b > 0$.  As before, we have 
\[ (w_3)_x \sim e^{-\gamma_{\la'}\bar{z}}, \qquad (w_3)_{xx} \sim e^{-\gamma_{\la'}\bar{z}}\]
up to a constant (positive or negative). Analyzing terms in \eqref{eq-Lw} and using \eqref{eq-gamma-la}
\[a_1 \le (w_1)_x \le b_1, \qquad |(w_1)_{xx}| \le C, \qquad (w_3)_x = o(e^{\frac{p-1}{p}\, \tau}), \qquad (w_3)_{xx} = o(e^{\frac{p-1}{p}\, \tau})\]
we find 
\[(w_3)_{xx} (w_1 + w_2 - 1) = o(e^{\frac{p-1}{p}\, \tau}), \quad (w_1)_{xx} (w_3 + w_2 - 1) = O(e^{\frac{p-1}{p}\, \tau}).\]
Similarly, we get
\[- \frac{2p}{p-1} (w_1)_x (w_3)_x + 2(p-1) (w_1 - 1)(w_3 -1) + 2(p-1) \, w_2 \, (w_1 + w_3 - 2)
= O(e^{\frac{p-1}{p}\, \tau}).\]
Hence, for $-M \le z \le M$,  we conclude 
\[L(w) \leq -(p-1)\, q  \la \, \tau \, e^{\frac{p-1}{p}\, \tau}  \, (w_1)_x + o(\tau e^{\frac{p-1}{p}\,\tau}) 
\le - (p-1) q \la \, a_1 \, \tau \, e^{\frac{p-1}{p}\, \tau}   + o (\tau \, e^{\frac{p-1}{p}\,\tau}) < 0\]
for $\tau <<0$. 

\begin{case}
$M \le z \le x(\tau) - \la\tau (1 - q e^{\frac{p-1}{p}\,\tau}) + h$. 
\end{case}

Using \eqref{eq-intersection}, in this case  we have
$$M \leq z \leq x(\tau) - \la\tau (1 - q e^{\frac{p-1}{p}\,\tau}) + h=  -\frac{(\gamma_\la \gamma_{\la'} + (p-1))}{p \gamma_\la}\, \tau + C(\la,\la',h,h',p)+o(1).$$
Also, $M + \la\tau (1 - q e^{\frac{p-1}{p}\, \tau}) - h \le x \le x(\tau)$ implies that 
\[\bar{z} \ge -x(\tau) - \la'\tau (1 - q e^{\frac{p-1}{p}\, \tau}) + h' = -\frac{\gamma_\la (\la + \la')}{\gamma_\la + \gamma_{\la'}}\, \tau + C(\la,\la',h,h',p)+o(1).\]
It follows that  in the considered region both $z >>1$ and $\bar z >>1$, hence   
\[w_1 = 1 + C_p e^{-\gamma_\la z}, \qquad (w_1)_x \sim - C_p e^{-\gamma_\la z}, \qquad (w_1)_{xx} \sim C_p e^{-\gamma_\la z}\]
and 
\[w_3 = 1 + C_p' e^{-\gamma_{\la'} \bar{z}}, \qquad (w_3)_x \sim C_p' e^{-\gamma_{\la'} \bar z}, \qquad (w_3)_{xx} \sim C_p' e^{-\gamma_{\la'} \bar z}\]
up to a positive constant, with $C_p>0, C'_p >0$. 
Analyzing terms in \eqref{eq-Lw},   we have
\[
\begin{split}
(w_3)_{xx} (w_1 + w_2 - 1) &\sim C_1 \, e^{-\gamma_{\la'}\bar{z}} e^{\frac{p-1}{p}\, \tau} + C_2 \, e^{-\gamma_{\la'}\bar{z}} e^{-\gamma_\la z}  \\
&\sim (w_1)_x\, \left( C_1 \, e^{\frac{p-1}{p}\, \tau}\, e^{-\gamma_{\la'}\bar{z} + \gamma_\la z} + C_2 \,  e^{-\gamma_{\la'}\bar{z}}\right)
\end{split}
\]
with $C_1, C_2 >0$. 
Note that
\[e^{-\gamma_{\la'}\bar{z}} \le e^{\frac{\gamma_\la\gamma_{\la'} \, (\la + \la')\, \tau}{\gamma_\la + \gamma_{\la'}}} \le C e^{\frac{p-1}{p}\, \tau}\]
since $\la \gamma_{\la} \ge \frac{p-1}{p}$. Furthermore,
\[e^{-\gamma_{\la'}\bar{z} + \gamma_\la z} \le e^{\gamma_\la(x(\tau) - \la\tau) - \gamma_{\la'} (-x(\tau) - \la'\tau)} \le C\]
due to a definition of $x(\tau)$. Overall we have,
\[(w_3)_{xx}\, (w_1 + w_2 - 1) \le C \, e^{\frac{p-1}{p}\, \tau}\, (w_1)_x = o \big ( (\tau e^{\frac{p-1}{p}\, \tau})\, (w_1)_x \big ).\]
Similarly we analyze other terms in \eqref{eq-Lw} of the same form in the considered region and we also use that 
$p q \, \la' \, (\tau e^{\frac{p-1}{p}\, \tau})' \,  (w_3)_x \leq 0$,  to conclude that the dominating term is $-p q \, \la \, (\tau e^{\frac{p-1}{p}\, \tau})' \,  (w_1)_x < 0$, hence
\[L(w) \leq -(p-1)\, q  \la \, \tau e^{\frac{p-1}{p}\, \tau}  \, (w_1)_x + o(\tau e^{\frac{p-1}{p}\,\tau})   < 0\]
for $\tau <<0$. 

Finally, above analysis in cases 1-3  yields  that $L(w) \leq  0$, meaning that $w$ is a supersolution as stated in the Lemma.
\end{proof}

\section{The existence of a five parameter family of  ancient solutions}
\label{sec-solutions}

In this section we show the the existence of a five parameter ancient solution $u_\bla$ of \eqref{eqn-u}, as stated in the next theorem. 

\begin{thm}\label{thm-ubla}
For any $(\bla) \in \mathbb{R}^5$ such that $\la, \la' > 1$, $k \ge 0$, there exists an ancient solution $u_\bla$ to 
\eqref{eqn-u} defined on $\mathbb{R} \times (-\infty, T)$, for some $T = T_\bla \in (-\infty, \infty]$. Moreover,
\[w_\bla^-(x,\tau) \le u_\bla(x,\tau) \le w_\bla^+(x,\tau)\]
where $w_\bla^-$ and $w_\bla^+$ are defined by \eqref{eqn-vbla} and \eqref{eqn-wbla}, respectively.
\end{thm}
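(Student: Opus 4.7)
The plan is to construct $u_\bla$ by a standard approximation-from-the-past argument. Let $\tau_0$ be as provided by Lemma \ref{lem-super} and choose any sequence $\tau_n \to -\infty$ with $\tau_n \leq \tau_0$. For each $n$ let $u_n$ be the solution of \eqref{eqn-u} on $\R \times [\tau_n, T_n)$ with initial datum $u_n(\cdot,\tau_n) := w_\bla^+(\cdot,\tau_n)$, where $T_n > \tau_n$ is the maximal smooth existence time. Equivalently, after setting $\vp_n := u_n^{-1/(p-1)}$ and lifting to $S^n$ through the stereographic change of variables \eqref{eqn-cv1}, one solves the rescaled fast diffusion equation on the compact manifold $S^n$ with smooth strictly positive initial data, for which short-time existence and smoothness are standard.

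The heart of the argument is comparison. The function $w_\bla^-$ is the pointwise maximum of three classical solutions of \eqref{eqn-u}, hence a viscosity subsolution, while by Lemma \ref{lem-super} $w_\bla^+$ is a classical supersolution on $\R \times (-\infty,\tau_0]$. At the initial time $\tau_n$ one has $w_\bla^-(\cdot,\tau_n) \leq w_\bla^+(\cdot,\tau_n) = u_n(\cdot,\tau_n)$. After lifting to $S^n$ all three functions are bounded, so the parabolic maximum principle yields
\[ w_\bla^-(x,\tau) \leq u_n(x,\tau) \leq w_\bla^+(x,\tau) \qquad \text{on } \R \times [\tau_n, \min(T_n,\tau_0)]. \]
Since $w_\bla^- \geq 1$ the coefficient $u$ of $u_{xx}$ in \eqref{eqn-u} is uniformly bounded below, so the equation is locally uniformly parabolic; standard continuation arguments then give $T_n \geq T_\bla$ for some $T_\bla > \tau_0$ independent of $n$.

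To conclude, the uniform two-sided bounds on $u_n$ make $\vp_n$ uniformly bounded above and below by positive constants on every compact set $K \subset S^n \times (-\infty,T_\bla)$. The equation for $\vp_n$ is then uniformly parabolic on $K$ with smooth coefficients, so Krylov--Safonov plus Schauder estimates provide uniform $C^{2+\alpha,1+\alpha/2}_{\rm loc}$ bounds. A diagonal extraction yields $u_n \to u_\bla$ locally smoothly, producing the required classical ancient solution with the stated sandwich bound on $\R \times (-\infty,\tau_0]$; since $\tau_0$ was arbitrary the bound persists on the whole backward range, confirming ancientness.

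The principal obstacle is the unbounded spatial domain: both barriers grow like $e^{(p-1)|x|}$ as $|x| \to \infty$ and \eqref{eqn-u} is degenerate there, so a naive application of the maximum principle on the cylinder is not justified. This is precisely why the comparison is carried out after the stereographic lift to $S^n$, where the barriers and $u_n$ all extend as smooth bounded functions and the two ``points at infinity'' $x \to \pm\infty$ correspond to the two antipodes of the sphere; on that compact manifold the maximum principle is routine. A minor secondary technicality is that $w_\bla^-$ is only Lipschitz, being a maximum of smooth functions, so comparison against it must be read either in the viscosity sense or after mollification.
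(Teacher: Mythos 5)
Your proposal is correct and follows essentially the same route as the paper: solve the initial value problem from times $\tau_n \to -\infty$ with datum $w_\bla^+(\cdot,\tau_n)$, trap the approximants between the subsolution $w_\bla^-$ (a maximum of exact solutions) and the supersolution $w_\bla^+$ of Lemma \ref{lem-super}, and extract a locally smooth limit by interior parabolic estimates. Your additional care in justifying the comparison principle by lifting to $S^n$, where the barriers become bounded positive conformal factors, addresses a point the paper leaves implicit but does not change the argument.
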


\begin{proof}
We have seen in Lemma \ref{lem-super}  that $w^+_\bla$ is a supersolution of \eqref{eqn-u} for $\tau \leq \tau_0$ with $\tau_0 <<0$.
For any $m \in \mathbb{N}$ with $-m < \tau_0$,   let  $u_m$ denote the solution of the initial value problem 
\be
\label{eqn-um}
\begin{cases}
p\, u_\tau = u\, u_{xx} - \frac p{p-1} \,  u_x^2 + (p-1) \, \big ( u^2 - u \big ) \qquad & x\in  \R, \,\,  \tau > -m \\
\, u(\cdot,-m) = w^+_\bla(\cdot, -m) \qquad & x \in  \R
\end{cases}
\ee
with exponent
$p= \frac{n+2}{n-2} >1.$  By Lemma \ref{lem-super} and the comparison principle we immediately get that 
$$u_m(x,\tau) \le w_\bla^+(x,\tau), \qquad  -m \leq \tau \leq \tau_0.$$ 
On the other hand, since $v_\la(x - \la\tau + h)$, $v_{\la'}(-x-\la'\tau+h')$ and $\xi_k(\tau)$ are all solutions to \eqref{eqn-u}, 
it follows that   $w_\bla^-(x,\tau):= \min (v_\la(x - \la\tau + h), \xi_k(\tau), v_{\la'}(-x-\la'\tau+h')) $  is a subsolution of \eqref{eqn-u}. Furthermore, we claim that 
\be\label{eqn-123}
w_\bla^-(x,\tau) \le u_m(x,\tau), \qquad \mbox{for} \,\, \tau < 0
\ee
which readily gives 
$$w_\bla^-(x,-m) \le u_m(x,-m).$$  
To see  \eqref{eqn-123}, we  recall that  $v_\la$ is monotone decreasing. Also that since all $v_\la,  v_{\la'}$\and $\xi_k$
are larger than equal to one, we have
$$w^+_\bla \geq \max \big ( v_\la( x- \la  \tau \, (1 - q \, e^{ \frac {p-1}p \tau} )  +h), \,  \xi_k(\tau), \,  
v_\la( - x- \la  \tau \, (1 - q  e^{ \frac {p-1}p\tau} )  +h) \big ).$$
Hence, on the regions where 
$w_\bla^-(x,\tau) = v_\la(x - \la \tau + h)$  and $w_\bla^-(x,\tau) = v_{\la'}(-x - \la' \tau + h')$,  respectively,  we have 
\[w_\bla^-(x,\tau) = v_\la(x -\la \tau  + h) \le v_\la(x -  \la \tau \, ( 1  -  q e^{\frac{p-1}{p}\,  \tau} ) ) \le w_\bla^+(x,\tau)\]
and
\[w_\bla^-(x,\tau) = v_{\la'}(-x - \la' \tau + h') \le v_{\la'}(-x - \la' \tau\,  (1-  q e^{\frac{p-1}{p} \tau}) + h') \le w_\bla^+(x,\tau).\]
Finally, on the region where  $w_\bla^-(x,\tau) = \xi_k(\tau)$ we  immediately have
$$w_\bla^-(x,\tau) = \xi_k(\tau)  \le w_\bla^+(x,\tau).$$
Hence,
$w_\bla^-(x,-m) \le \xi_k(-m) \le w_\bla^+(x,-m) = u_m(x,-m)$ 
and by the   comparison principle we  $w_\bla^-(x,\tau) \le u_m(x,\tau)$,  for all $\tau \ge -m$.  We conclude from the discussion above that 
$$w_\bla^-(x,\tau) \le u_m(x,\tau) \leq w_\bla^+(x,\tau), \qquad  -m \leq \tau \leq \tau_0.$$
It follows from standard arguments in quasilinear parabolic equations that the sequence of solutions $\{ u_m \}_{\{ m < \tau_0 \}}$
is equicontinuous on compact subsets of $\R \times (-\infty, \tau_0)$, hence passing to a subsequence $u_{m_k}$ it converges 
uniformly on compact subsets of $\R \times (-\infty, \tau_0)$ to a smooth ancient solution $u_\bla$ of \eqref{eqn-u}. In addition, 
$u$ satisfies  
\be\label{eqn-inequ}
w_\bla^-(x,\tau) \le u_\bla(x,\tau) \leq w_\bla^+(x,\tau), \qquad  -m \leq \tau \leq \tau_0.
\ee  
\end{proof}

We will next observe that $u_\bla$ defines a smooth ancient  solution of the rescaled Yamabe flow on $S^n \times (-\infty, T_\bla)$,
for some {\em maximal time}  $T_\bla \in (-\infty, +\infty]$ at which $u_\bla < +\infty.$

\begin{cor}\label{cor-phibla}
Set  $\vp_\bla:= u_\bla^{-\frac 1{p-1}}$, where $u_\bla$ is an  ancient  solution of \eqref{eqn-u} given by  Theorem \ref{thm-ubla}.  
Then, $\vp_\bla$  defines a solution of \eqref{eqn-vp}  which is equivalent to having a smooth solution of the rescaled Yamabe flow
 $g_t = - (R-1)\, g$ on $S^n \times (-\infty, T_\bla)$.
\end{cor}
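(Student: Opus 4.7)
The plan is to verify in turn: (i) $\vp_\bla$ is well-defined, positive, and smooth; (ii) it satisfies (\ref{eqn-vp}); (iii) the resulting cylindrical metric $g_\bla = \vp_\bla^{4/(n-2)} g_{cyl}$ extends smoothly across the two poles of $S^n$ and evolves by the rescaled Yamabe flow.

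For (i), Theorem~\ref{thm-ubla} gives $u_\bla \ge w_\bla^-$, and since $v_\la, v_{\la'} \ge 1$ (by the monotonicity of the profiles together with the asymptotics (\ref{eqn-vla})) and $\xi_k(\tau) \ge 1$ (from (\ref{eqn-xik}) with $k \ge 0$), we obtain $u_\bla \ge 1$ on its entire domain. Hence $\vp_\bla = u_\bla^{-1/(p-1)}$ is well-defined, bounded by $1$, and positive. The strict parabolicity of (\ref{eqn-u}) on $\{u > 0\}$ together with classical parabolic Schauder theory then yields smoothness of $u_\bla$, and therefore of $\vp_\bla$, on $\R \times (-\infty, T_\bla)$, with $T_\bla$ the maximal time before $u_\bla$ blows up.

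For (ii), the substitution $u = \vp^{-(p-1)}$ gives, by the chain rule,
\[ u_x = -(p-1)\vp^{-p}\vp_x, \qquad u_{xx} = p(p-1)\vp^{-p-1}\vp_x^2 - (p-1)\vp^{-p}\vp_{xx}, \qquad u_\tau = -(p-1)\vp^{-p}\vp_\tau. \]
Plugging these into (\ref{eqn-u}) and multiplying through by $-\vp^{2p-1}/(p-1)$, the two $\vp_x^2$ contributions cancel and one is left with $p\vp^{p-1}\vp_\tau = \vp_{xx} + \vp^p - \vp$, i.e.\ (\ref{eqn-vp}). The introduction has already derived, via the cylindrical change of variables (\ref{eqn-cv1}), that (\ref{eqn-vp}) is equivalent on $\R \times S^{n-1} \simeq S^n \setminus \{N,S\}$ to the rescaled Yamabe flow $g_\tau = -(R-1)g$ for $g_\bla = \vp_\bla^{4/(n-2)} g_{cyl}$; applying this to $\vp_\bla$ produces the claimed evolution on the cylinder.

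For (iii), we must extend $g_\bla$ smoothly across $x = \pm\infty$. Near $x = -\infty$ the sub-super bounds of Theorem~\ref{thm-ubla} together with (\ref{eqn-ula1})--(\ref{eqn-ula3}) and (\ref{eqn-xik2}) force
\[ u_\bla(x,\tau) = v_\la(x - \la\tau + h)\,(1 + o(1)), \qquad x \to -\infty, \]
since $\xi_k$ and $v_{\la'}(-x - \la'\tau + h')$ remain bounded while $v_\la(x - \la\tau + h) \sim C\,e^{-(p-1)(x - \la\tau + h)} \to \infty$. Equivalently, $\vp_\bla$ matches the single-soliton profile $\vps_\la$ at leading order near the corresponding pole $y = 0$ of $\R^n$ in stereographic coordinates, and the introduction records that the decay (\ref{eqn-behavior1}) guarantees $\vps_\la$ lifts to a smooth self-similar solution of (\ref{eq-ufd}) on $\R^n$. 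The symmetric argument at $x = +\infty$, using the reflected soliton $v_{\la'}(-x - \la'\tau + h')$, covers the antipodal pole.

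The main obstacle is upgrading this leading-order asymptotic matching to full $C^\infty$ agreement of the metric at each pole. We propose to handle this by a parabolic bootstrap: the separation between $w_\bla^-$ and $w_\bla^+$ is exponentially small in the cylindrical variable $z$ of (\ref{eqn-zbz}), which controls $u_\bla - v_\la$ in $L^\infty$; pulling back to Euclidean coordinates near $y=0$ converts (\ref{eqn-u}) into a nondegenerate quasilinear parabolic equation to which interior Schauder estimates apply, and these propagate exponentially small pointwise control to all derivatives. The same bootstrap at the antipodal pole completes the smooth extension to $S^n \times (-\infty, T_\bla)$.
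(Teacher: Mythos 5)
Your argument is correct and follows essentially the same route as the paper: the change of variables $u=\vp^{-(p-1)}$ turning \eqref{eqn-u} into \eqref{eqn-vp}, the two-sided bound of Theorem \ref{thm-ubla} (giving $u_\bla\ge w^-_\bla\ge 1$ and pinching $\vp_\bla$ between $(w^+_\bla)^{-1/(p-1)}$ and $(w^-_\bla)^{-1/(p-1)}$, which lift to smooth positive metrics on $S^n$), and maximal-time continuation. The pole regularity you sketch in step (iii) -- passing to stereographic coordinates where the two-sided bounds make the equation uniformly parabolic and then applying Schauder estimates -- is exactly the argument the paper carries out in detail only later, in the proof of Theorem \ref{thm2} via \eqref{eq-coord-change}--\eqref{eq-par-polar}, so nothing essential is missing.
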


\begin{proof} It is clear that $\vp_\bla:= u_\bla^{-\frac 1{p-1}}$ is a smooth ancient solution of \eqref{eqn-vp} which in addition satisfies
\be\label{eqn-inequ2}
{w_\bla^+}(x,\tau)^{-\frac 1{p-1}} \le \vp_\bla(x,\tau) \leq {w_\bla^-}(x,\tau)^{-\frac 1{p-1}}, \qquad  x \in \R, \,\, -m \leq \tau \leq \tau_0.
\ee  
Notice next that  ${w_\bla^-}(\cdot,\tau)^{-\frac 1{p-1}}$ and ${w_\bla^+}(\cdot,\tau)^{-\frac 1{p-1}}$ define 
positive smooth metrics, when  lifted on $S^n$. Thus,   \eqref{eqn-inequ2} implies that $\vp_\bla(x,\tau)$,  
when lifted on $S^n$,  also defines a smooth metric which equivalent (up to dilation) to a solution of the rescaled Yamabe flow
$g_t = - (R-1)\, g$. Now standard results imply  that there exists a maximal time $T_\bla \in (-\infty, +\infty]$ up to which this solution is defined,
which means that $\vp_\bla (x, t) >0$ for all $t < T_\bla$.  

\end{proof} 

\section{Geometric properties of solutions}
\label{sec-geom}

In this last section we will derive the geometric properties of the ancient solution of the equation \eqref{eqn-vp}  as constructed 
in Theorem \ref{thm-ubla}. We have observed in Corollary \ref{cor-phibla}
that the  one parameter family of metrics    $g_\bla(\tau):= \vp_\bla^\mmm(\cdot, \tau) \, g_{_{cyl}}$   can be lifted to a smooth one parameter family of metrics on $S^n \times (-\infty, T_\bla)$ which defines (up to dilation by constants) an ancient  rotationally symmetric  solution  of the   {\em rescaled Yamabe flow}  on $S^n$, equation 
\begin{equation}
\label{eq-RYF}
\frac{\partial}{\partial \tau} g = -(R - 1) \, g.
\end{equation}

\smallskip
We next prove the following result concerning the behavior of the Riemannian curvature  of the metric $g_\bla(\tau)$ near $\tau=  -\infty$. 

\begin{thm}\label{thm2}
The solution  $g_\bla(\tau):=\vp_\bla^\mmm(\cdot, \tau) \, g_{_{cyl}}$ defines a type I ancient solution to the Yamabe flow in the sense that, for any $\tau_0 < T_\bla$,  the norm of its curvature operator satisfies the uniform bound 
\be \label{eqn-btype1} \| { \mbox{\em Rm}} \, (g_\bla)\| \le C, \qquad \mbox{for all} \,\,  \tau \leq \tau_0. 
\ee
\end{thm}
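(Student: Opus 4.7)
The plan is to combine the two-sided barriers $w_\bla^-(\cdot,\tau) \le u_\bla(\cdot,\tau) \le w_\bla^+(\cdot,\tau)$ from Theorem \ref{thm-ubla} with parabolic Schauder regularity and with the fact, established in \cite{DS2}, that the soliton profiles $v_\la$ themselves generate type I ancient solutions of the Yamabe flow. The goal is to bound $\|\rem(g_\bla)\|$ uniformly on $(-\infty,\tau_0]$ by treating separately a bulk region and two soliton tails.

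First I would express the Riemannian curvature of the rotationally symmetric metric $g_\bla = \vp_\bla^{\mmm}\, g_{cyl}$, once lifted to $S^n$, as an explicit algebraic function of $u_\bla$ and its first two spatial derivatives, together with the sectional curvature contribution of $g_{S^{n-1}}$. In this form, controlling $\|\rem(g_\bla)\|$ reduces to bounding $u_\bla$, $(u_\bla)_x$ and $(u_\bla)_{xx}$ pointwise, uniformly in $\tau \le \tau_0$.

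Next I would split $\R$ into a bulk region $|x|\le M$ and two tail regions $\pm x \ge M$, with $M=M(\tau_0)$ chosen so that the tails contain only points with $z:=x-\la\tau+h$ large (respectively with $\bar z:=-x-\la'\tau+h'$ large). In the bulk, the lower barrier gives $u_\bla \ge 1$ and the upper barrier gives $u_\bla \le C(M,\tau_0)$, so equation \eqref{eqn-u} is uniformly parabolic and non-degenerate there; standard interior parabolic Schauder estimates then deliver uniform $C^{2,\alpha}$ bounds on $u_\bla$, and hence a uniform curvature bound. In each tail the same barriers pinch $u_\bla$ within an $O(e^{\frac{p-1}{p}\tau})$ neighborhood of a single traveling wave $v_\la(x-\la\tau+h)$ (or of $v_{\la'}(-x-\la'\tau+h')$), since $w_\bla^+ - w_\bla^-$ decays at this rate there. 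Working in the moving frame $z$, I would view $u_\bla - v_\la$ as a solution of the linear parabolic equation obtained by linearizing \eqref{eqn-u} around $v_\la$, with an exponentially small forcing coming from the $q\, e^{\frac{p-1}{p}\tau}$ correction built into $w_\bla^+$; a weighted maximum principle together with interior regularity in this moving frame should upgrade the $C^0$ closeness to $C^2$ closeness between $u_\bla$ and the soliton. Since the soliton itself generates a type I ancient solution with uniformly bounded curvature in rescaled time (\cite{DS2}), this transfers to a uniform curvature bound on the tail. The opposite tail is handled symmetrically using $v_{\la'}$.

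Putting the three regions together yields $\|\rem(g_\bla(\tau))\| \le C$ for all $\tau \le \tau_0$, which is \eqref{eqn-btype1}. The main obstacle is the $C^2$-propagation step in the tails: the $C^0$ closeness to the soliton follows at once from the barriers, but upgrading to $C^2$ closeness strong enough to compare curvatures requires a weighted norm compatible with the soliton's own $e^{-\gamma_\la z}$ asymptotics, so that the linearized operator remains coercive on the class of admissible perturbations and the forcing coming from the $q\, e^{\frac{p-1}{p}\tau}$ correction can be absorbed.
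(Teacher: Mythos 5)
Your treatment of the bulk region is essentially the paper's: there the barriers pinch $u_\bla$ between $1$ and a constant close to $1$, equation \eqref{eqn-u} is uniformly parabolic, and interior estimates plus the conformal expression of the curvature finish the job. The gap is in the tails, and it is twofold. First, the reduction you start from --- that bounding $\|\rem(g_\bla)\|$ amounts to bounding $u_\bla$, $(u_\bla)_x$ and $(u_\bla)_{xx}$ pointwise --- fails exactly where it is needed: by \eqref{eqn-ula1} the pressure grows like $e^{(p-1)|x|}$ in the caps (this growth is precisely what makes the metric close up smoothly on $S^n$), so $u_\bla$ and its derivatives are unbounded there and the curvature stays finite only through cancellations among the terms; pointwise bounds on the individual quantities are neither available nor sufficient. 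Second, the step you yourself flag as the main obstacle --- upgrading the $C^0$ pinching $w^-_\bla \le u_\bla \le w^+_\bla$ to weighted $C^2$ closeness to $v_\la$ by linearizing \eqref{eqn-u} around the traveling wave and invoking coercivity of the linearized operator in a weight adapted to $e^{-\gamma_\la z}$ --- is left entirely unproved, and it is a substantial piece of analysis (a maximum principle or spectral argument for a degenerate linear operator on an unbounded moving domain). As written, the tail argument is a program rather than a proof.

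The paper avoids all of this machinery. In the cap one passes to polar coordinates in the moving frame, $\vP(z,\tau)=\hat\vp(y,\tau)\,|y|^{2/(p-1)}$ with $r=e^{\frac{p-1}{2}z}$ and $z=x-\la\tau+h$; the two-sided bound \eqref{eqn-inequ2}, combined with the asymptotics \eqref{eqn-vpla} of $\psi_\la$, then yields $c_M\le\hat\vp\le C$ on each ball $|y|\le M$ uniformly in $\tau\le\tau_0$. The transformed equation \eqref{eq-u-polar} is uniformly parabolic there, so interior parabolic estimates bound all derivatives of $\hat\vp$, and since $\hat\vp$ is also bounded below, the conformal formula for $\rem$ gives the uniform curvature bound directly --- no comparison with the soliton's curvature, no linearization, and no weighted norms are required. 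The intermediate region between the cap and the center is handled in cylindrical coordinates, where $1/2\le\vp\le 1$. If you want to salvage your approach, the cleanest fix is to drop the linearization entirely and observe that two-sided $C^0$ bounds on the conformal factor in nondegenerate coordinates already imply, by quasilinear interior regularity, all the derivative control you were trying to extract from $C^2$ closeness to the soliton.
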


\begin{remark}{ The statement of Theorem \ref{thm2}  exactly means that the unrescaled flow \eqref{eq-YF}, whose scaling by $|t|$ yields to the equation 
\eqref{eq-RYF}, is a type I ancient solution according to the Definition \ref{defn-ancient}}. 

\end{remark}

\begin{proof}[Proof  of Theorem \ref{thm2}]
It is sufficient to prove the bound \eqref{eqn-btype1} for $\tau_0 <<0$. Since our metric $g_\bla$ is conformally flat, the norm $\|\mbox{\text  \rem}\|$ of its curvature operator   can be expressed in terms of  powers (positive or negative) of the conformal factor and  its first and second order derivatives. On the other hand,  since the conformal 
factor  (in any parametrization) satisfies a quasilinear parabolic equation,   it follows by  standard parabolic estimates,   that 
  uniform upper and lower bounds away from zero on the conformal factor imply  uniform bounds on all its derivatives and therefore the
 desired  uniform bound on $\|  { \mbox{\text Rm}}\|$.  By Corollary  \ref{cor-phibla}, we have  ${\displaystyle g_\bla=\vp^{\frac 4{n-2}}\, 
 g_{cyl}}$, where $\vp$ is a solution of \eqref{eqn-vp} and satisfies $0 < \vp < 1$,  for all $\tau < T_\bla$. 
Notice that in order to simplify the notation we have dropped the index $\bla$ from $\vp_\bla$ and simply denote it by $\vp$.  
 It follows by the discussion
 above that in the  region where $1/2 < \vp  <1$ the bound  $\| { \mbox{\text  Rm}}\|  \leq C$ readily holds. However,  since
 $\vp(x,\tau) \to 0$, as $x \to \pm \infty$, in the region where $0 < \vp<1/2$ we will obtain the desired bound  by lifting 
 the metric $g_\bla$ on $\R^n$ and showing that in the considered region if $g_\bla= \hat  \vp^{\frac 4{n-2}}\, g_{_{\R^n}}$, then
$c \leq \hat \vp \leq C$ for some positive constants $c, C$ which are uniform in $\tau$.  
 
\smallskip
 
Observe  that  is sufficient  to establish  the  uniform bound on $\|{ \mbox{\text  Rm}}\|$  
 for $x \le x(\tau)$, where $x(\tau)$ is given by \eqref{eq-intersection}, since in the remaining region the estimate could be proved in a similar manner. We go from cylindrical to polar coordinates via the following coordinate change,
\begin{equation}
\label{eq-coord-change}
\vP(z,\tau) = \hat{\vp}(y,\tau)\, |y|^{\frac{2}{p-1}}, \qquad r = |y| = e^{\frac{p-1}{2}\, z}
\end{equation}
where $z := x - \la\tau + h$ and  $\vP(z,\tau) := \vp(z +\la\tau - h)$. It follows by a direct calculation that  $\hat{\vp}(y,\tau)$ satisfies the equation 
\begin{equation}
\label{eq-u-polar}
(\hat{\vp}^p)_{\tau} = \alpha \, \Delta \hat{\vp} + \beta\,  r \, \hat{\vp}_r + \gamma \, \hat{\vp}
\end{equation}
for some  constants $\alpha>0$ and $\beta, \gamma$. For any $M >0$, the compact region $|y| \le 2M$ corresponds in cylindrical coordinates to the  region 
$$x \le \la\tau - h + \frac{2}{p-1}\, \log 2M =: \bar{x}_M(\tau).$$  Note that
$\la p > \gamma_\la  > \gamma_\la - \gamma_{\la'}$, implying that $\bar x_M(\tau) << x(\tau)$ holds, for $\tau <<0$.

Estimate \eqref{eqn-inequ2} is crucial in proving our bound. 
Let us  look first at the region $x \le \bar{x}_M(\tau)$, which in polar coordinates corresponds to the region $|y| \le M$. 
It follows from \eqref{eqn-inequ2} that in this region we have
\[(1 - o_M(1))\, \psi_\la \le \vp_\bla \le \psi_\la\]
for $\tau \leq \tau_0$ with $\tau_0 <<0$, where we recall that  $\psi_\lambda$ is the traveling wave solution of \eqref{eqn-vp2} which satisfies \eqref{eqn-vpla}. 
In polar coordinates the above abound corresponds to 
\begin{equation}
\label{eq-par-polar}
c_M \le \hat{\vp} \le C, \qquad \mbox{for} \,\,\, |y| \le M, \quad \tau \leq \tau_0.
\end{equation}
Having \eqref{eq-par-polar}, equation \eqref{eq-u-polar} is parabolic equation for $(y,\tau) \in B(0,2M) \times (-\infty, \tau_0)$, so standard parabolic estimates applied to equation \eqref{eq-u-polar} imply that we have uniform bounds on all the derivatives of $\hat{\vp}$ in the region $B(0,\frac{3M}{2}) \times (-\infty, 2\tau_0)$. Since $\hat{\vp}^{\frac{4}{n-2}}$ is the conformal factor of our metric $g_\bla$ in polar coordinates, by the previous discussion we have
\[\| { \mbox{\text  Rm}}(y,\tau)\| \le C, \qquad |y| \leq M  \qquad \tau\le 2\tau_0\]
for a uniform constant $C$. Equivalently we have a uniform curvature bound, in cylindrical coordinates, for all $x \le \bar{x}_M(\tau)$. 
Observe that this estimate implies that the is curvature uniformly bounded in the tip region of our ancient solution.

Now fix $M >0$ and let us  focus on the region $\bar{x}_M(\tau) \le x \le x(\tau)$ where 
 our solution that turns out to have the asymptotics of a cylindrical metric at $\tau \to -\infty$.  More precisely,  we have ${\displaystyle g_\bla=\vp^{\frac 4{n-2}}\, 
 g_{cyl}}$, where $\vp$ is a solution of \eqref{eqn-vp} and   by \eqref{eqn-inequ2} this region we may choose $\tau_0 <<0$ such that  
\[1/2 \le \vp(x,\tau) \le 1, \qquad \mbox{for} \,\, \tau \leq \tau_0.\]
The equation satisfied by $\vp$ is therefore uniformly parabolic and hence we have uniform estimates on the derivatives of $\vp$ in the inner region. As a result we have a uniform bound on $\| { \mbox{\text  Rm}}\|$ for all $\tau \le \tau_0$ in that region as well. This concludes the proof of our bound. 
\end{proof}

\centerline{\bf Acknowledgements}

\noindent P. Daskalopoulos  has been partially supported by NSF grant DMS-1266172.\\
M. del Pino has been
supported by grants Fondecyt  1150066  Fondo Basal CMM \\and Millenium Nucleus CAPDE NC130017.\\
N. Sesum has been partially supported by NSF grant  DMS-1056387.

\end{document}